\documentclass[12pt,reqno]{amsart}
\usepackage{amsthm,amsfonts,amsmath,amssymb}
\usepackage[pagebackref=true,backref=true,bookmarksopen=true]{hyperref}

\newtheorem{theorem}{Theorem}
\newtheorem{Conjecture}{Conjecture}
\newtheorem{prop}{Proposition}
\newtheorem{lemma}{Lemma}
\newtheorem{remark}{Remark}

\newcommand{\R}{{\mathbb R}}
\newcommand{\RV}{{\mathcal R}}
\newcommand{\T}{{\mathbf T}}
\newcommand{\I}{{\mathbf I}}
\newcommand{\RO}{{\RV\otimes\RV}}

\newcommand{\M}{{\mathbf M}}

\begin{document}
\title{A short note on Meyers' theorem}
\author{M.A. Perelmuter}
\address{SCAD Soft Ltd., 3a Osvity Street, Kyiv, 03037, Ukraine}
\email{mikeperelmuter@gmail.com}
\keywords{Riesz transform, second order elliptic operators,  Meyers' theorem}
\subjclass[2020]{35B45 42B37 35B65}

\begin{abstract}
We consider weak solutions of the second-order elliptic equation $\operatorname{div} (\mathbf{M}\nabla u) = \operatorname{div}\mathbf{F}$ in $\Omega\subset {\mathbb R}^d$ with Dirichlet boundary conditions, where $\mathbf{M}$ is a uniformly elliptic real-valued symmetric matrix $\mathbf{M}:\Omega \rightarrow \mathbb{R}^{d\times d}$ such that $\frac{1}{K} |\xi|^2 \leq\langle \mathbf{M}\xi,\xi \rangle \leq K |\xi|^2, \forall \xi \in \mathbb{R}^d, K > 1$. We prove that $\|\nabla u\|_p \leq C\|\mathbf{F}\|_p$ for any $p\in \left(\frac{26K-16}{13K-3}, \frac{26K-16}{13K-13}\right)$.
\end{abstract}
\maketitle
\section{Introduction}
Let $\Omega \subset {\mathbb R}^d, d\geq 2$ be an open bounded domain with $C^2$ boundary. Let $\M:\Omega \rightarrow \R^{d\times d}$ be a measurable real-valued symmetric matrix that satisfies the ellipticity conditions
\begin{equation*}
\frac{1}{K} |\xi|^2 \leq\langle \M\xi,\xi \rangle \leq K |\xi|^2, \qquad  \forall \xi \in \mathbb{R}^d, K > 1.
\end{equation*}
In this note, we consider the $L^p$ regularity of solutions to the following elliptic equation in divergence form with discontinuous coefficients
\begin{equation}\label{equation}
\operatorname{div}(\M\nabla u) = \operatorname{div}\mathbf{F}  \text{ in } \Omega;\quad u = 0  \text{ on } \partial \Omega
\end{equation}
The problem of first-order regularity, namely $W^{1,p}$-estimates for weak solutions,
was solved by N.~G.~Meyers~\cite{M}. Meyers considered equation \eqref{equation} on a bounded domain with $C^1$ boundary. He showed that there exists dimension-dependent exponent $p_K > 2$  such that for all $p\in (q_K,p_K)$ (here $q_K=\frac{p_K}{p_K-1}$ is the H\"older conjugate exponent) every weak solution $u$ satisfies the higher integrability estimate
\begin{equation}\label{Meyers1}
  	\| \nabla u\|_p \leq C\|\mathbf{F}\|_p.
\end{equation}
The interval $(q_K,p_K)$ is called the \textit{Meyers interval}. Determining the optimal Meyers interval for $d>2$ remains an open problem.
\begin{Conjecture}\label{Conj}
The Meyers interval is $(\frac{2K}{K+1},\frac{2K}{K-1})$.
\end{Conjecture}
For $d=2$, Conjecture \ref{Conj} is proved by Astala-Iwaniec-Saksman \cite{AIS} (using Astala's area distortion theorem) and this result is optimal (see the example in \cite{M}).
\par
The dimension-free interval
\begin{equation}\label{ISpK}
\left(\frac{14K-12}{7K-5},\frac{14K-12}{7K-7}\right)
\end{equation}
was obtained by T.~Iwaniec and C.~Sbordone~\cite[Theorem 2]{IS2001} and currently it is the best known result.
\par
For simplicity, below we will consider the case $\Omega=\R^d$, but everything stated below easily extends to the case of domains with an appropriately smooth boundary.
\par
We will use the following notation for the \textit{Riesz transform}
$$(R_j f)(x) = p.v. \frac{\Gamma\left(\frac{d+1}{2}\right)}{\pi^{\frac{d+1}{2}}} \int\limits_{\R^d}\frac{x_j-y_j}{|x-y|^{d+1}}f(y)dy, \quad j=1,\ldots,d$$
and the \textit{vector Riesz transform} $\RV= \left(R_1,R_2,\cdots,R_d\right).$
\par
In the sequel, we will also use the notation $\|\cdot\|_{p,p}:=\|\cdot\|_{L^p(\R^d,\R^d)\to L^p(\R^d, \R^d)}$.
\par
Following \cite{IS2001}, we define the operator
$$\T:=\I+2(\RO). $$
$\T:L^2(\R^d,\R^d)\to L^2(\R^d,\R^d)$ is an isometry.
Iwaniec and Sbordone \cite[Section 11]{IS2001} showed that if
\begin{equation}\label{KBound}
\|\mathbf T\|_{p,p} < \frac{K+1}{K-1},
\end{equation}
then $p$ belongs to the Meyers interval and posed
\begin{Conjecture}\label{Conj91}\cite[Conjecture 9.1]{IS2001}
For every dimension $d \geq 2$ and every $1<p<\infty$, we have
$$\|\T\|_{p,p}=p^*-1. $$
\end{Conjecture}
\begin{remark}
Iwaniec and Sbordone used the estimate $\|\T\|_{p,p}\leq 6(p^*-1)$ from  \cite[Theorem 1]{BL}.
Combining this estimate with the Riesz--Thorin theorem yields $\|\T\|_{p,p}\leq 7p-13$ (see \cite[Lemma 9.1]{IS2001}).
As an immediate consequence, one obtains \eqref{ISpK}.
\end{remark}
The aim of this note is to improve \eqref{ISpK}.
\section{Main result}
Our main result is
\begin{theorem}\label{main} The Meyers interval contains
$$\left(\frac{26K-16}{13K-3}, \frac{26K-16}{13K-13}\right).$$
\end{theorem}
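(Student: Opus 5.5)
The plan is to reduce Theorem~\ref{main} to an operator-norm estimate for $\T$, via the criterion \eqref{KBound} of Iwaniec and Sbordone. First I will find which linear bound for $p\ge 2$ produces the stated interval. If $\|\T\|_{p,p}\le ap-b$, then \eqref{KBound} holds whenever $ap-b<\frac{K+1}{K-1}$, that is, whenever $p<\frac{(b+1)K+(1-b)}{a(K-1)}$. Matching this with $\frac{26K-16}{13K-13}$ forces $a=\frac{13}{5}$ and $b=\frac{21}{5}$. So the target estimate is
\begin{equation*}
\|\T\|_{p,p}\le \frac{13p-21}{5},\qquad p\ge 2 .
\end{equation*}
This is consistent with the isometry at $p=2$, where the right-hand side equals $1$. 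It is the analogue of the bound $7p-13$ from \cite[Lemma 9.1]{IS2001}, with slope $\frac{13}{5}$ in place of $7$. The lower endpoint $\frac{26K-16}{13K-3}$ is exactly the H\"older conjugate of the upper one. It therefore follows by duality: $\RO$ is self-adjoint (symbol $-\xi\xi^T/|\xi|^2$), so $\|\T\|_{p,p}=\|\T\|_{p',p'}$.

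The second step is to replace the constant $6(p^*-1)$ of \cite{BL} by a sharper constant $c\,(p^*-1)$ for $\T$, or for $\RO$. I would use the martingale (heat/space--time Brownian motion) representation of second-order Riesz transforms. This is the Ba\~nuelos--M\'endez-Hern\'andez / Geiss--Montgomery-Smith--Saksman technique, which realizes $\RO$ as a conditional expectation of a matrix-valued martingale transform. The transforming matrix projects onto the gradient direction, and its operator norm is at most $1$. Burkholder's inequality for Hilbert-space-valued differential subordination (Wang's extension) then bounds this transform by $p^*-1$. Hence $\|\RO\|_{p,p}\le p^*-1$ and
\begin{equation*}
\|\T\|_{p,p}\le 1+2(p^*-1)=2p^*-1 .
\end{equation*}
If the projection structure can be exploited directly (the symbol of $\T$ is the reflection $I-2\xi\xi^T/|\xi|^2$), the constant may improve further. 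What is needed is any bound $\|\T\|_{p,p}\le c(p-1)+O(1)$ with an explicit small constant.

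The third step reproduces the argument of \cite[Lemma 9.1]{IS2001}. I will interpolate, via Riesz--Thorin, between the isometry at $p=2$ and the bound from the second step at a well-chosen exponent $p_0>2$. Setting $\frac1p=\frac{1-\theta}{2}+\frac{\theta}{p_0}$ gives $\|\T\|_{p,p}\le (2p_0-1)^{\theta}$. I then bound the right-hand side from above by the chord of this (convex in $p$) function between $p=2$ and $p=p_0$, and choose $p_0$ so that the resulting linear majorant is $\frac{13p-21}{5}$. Beyond $p_0$ the bound from the second step is used directly, and it must also lie below $\frac{13p-21}{5}$ there.

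I expect the main obstacle to be this last verification. The chord/tangent optimization must yield slope exactly $\frac{13}{5}$, with the inequality $(2p_0-1)^{\theta(p)}\le\frac{13p-21}{5}$ holding uniformly for $2\le p<\infty$. I would first try to settle it by checking the behaviour at $p=2$ (derivative comparison) and at the junction $p_0$. If the constant from the second step is not good enough, the fallback is to refine it through the projection structure of $\RO$.
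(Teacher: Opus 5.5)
\textbf{Steps 1 and 2 are sound.} They coincide with the paper. The needed estimate is exactly $\|\T\|_{p,p}\le 2.6(p-2)+1=\frac{13p-21}{5}$ for $p\ge 2$. The lower endpoint follows from $\|\T\|_{p,p}=\|\T\|_{p',p'}$. The only operator input is $\|\RO\|_{p,p}\le p^*-1$, hence $\|\T\|_{p,p}\le 2p^*-1$; the paper quotes this from Cassese's matrix-valued bound, restricted to one-column matrices. No sharper constant is needed, so your fallback of refining it is unnecessary.

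\textbf{The gap is in step 3, and the argument you outline for it fails.} This step is where the real work lies. Write $h(p)=(2p_0-1)^{\theta(p)}=\exp\bigl(\lambda(\tfrac12-\tfrac1p)\bigr)$ with $\lambda=\log(2p_0-1)/(\tfrac12-\tfrac1{p_0})$. This function is not convex on $[2,p_0]$: $h''(p)=h(p)\,\lambda\,p^{-4}(\lambda-2p)$, so $h$ is concave for $p>\lambda/2$, in particular near $p_0$. Hence the chord does not majorize it.

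Concretely, the chord from $(2,1)$ to $(p_0,2p_0-1)$ has slope $\frac{13}{5}$ only for $p_0=\frac{16}{3}$. For that choice, at $p=5$ you get $\theta=0.96$ and $(29/3)^{0.96}\approx 8.83>8.8=\frac{13\cdot 5-21}{5}$. The inequality fails for $p$ between roughly $4.4$ and $\frac{16}{3}$. Your proposed junction check would in fact expose this, since $h'(16/3)\approx 2.47<\frac{13}{5}$, so $h$ lies above the line just to the left of $p_0$.

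More generally, $h$ stays below the chord near $p_0$ only if $h'(p_0)$ is at least the chord slope. This is equivalent to $(2p_0-1)\log(2p_0-1)\ge p_0(p_0-1)$, which forces $p_0\le 4.91$ or so and hence a slope of at least about $2.69$. So no chord-based choice of $p_0$ can produce $\frac{13}{5}$.

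\textbf{The missing idea} is to let the linear majorant pass strictly above the interpolation endpoint, so that the slack absorbs the concave part of $h$. The paper interpolates between $L^2$ and $L^6$, where $\|\T\|_{6,6}\le 11<11.4=\frac{13\cdot 6-21}{5}$. With $\theta=\frac{3(p-2)}{2p}$, the claim on $[2,6]$ becomes $G(\theta)=3+8.4\theta-(3-2\theta)11^{\theta}\ge 0$ on $[0,1]$.

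$G$ is concave on $[0,\theta_0]$ and convex on $[\theta_0,1]$, where $\theta_0=\frac32-\frac{2}{\log 11}$. The paper therefore uses a chord bound on the first piece ($G(0)=0$, $G(\theta_0)\approx 0.358$) and a tangent-line bound at $\theta_1=0.86$ on the second. The minimum of $G$ is only about $0.16$, so this verification must be done carefully. For $p\ge 6$, the crude bound $2p-1\le\frac{13p-21}{5}$ suffices.
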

\begin{prop}\label{eq:Cassese} \cite[Proposition 4.1]{Cassese}
For $p\in(1,\infty)$ and $d\in {\mathbb N}\setminus\{0,1\}$,
\begin{equation}\label{RieszMatrix}
\|\RO\|_{L^p(\R^d,M_d(\R))\to L^p(\R^d, M_d(\R))}\le p^*-1,
\end{equation}
where $M_d(\R)$ is the space of $d\times d$ matrices over $\R$ and the matrix operator $\RO$ acts on a matrix-valued function $F\colon\R^d\to M_d(\R)$ by the rule
\begin{equation*}
((\RO)(F))_{i,l}=\sum\limits_j R_i R_j F_{j,l}.
\end{equation*}
\end{prop}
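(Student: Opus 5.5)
The plan is to use the probabilistic method of Bañuelos and Wang: express $\RO$ as a conditional expectation of a martingale transform and then invoke Burkholder's sharp inequality for Hilbert-space-valued martingales. The key observation is that the operator in \eqref{RieszMatrix} acts on the whole matrix $F$ at once. It is a Fourier multiplier with matrix symbol $-\xi\xi^{T}/|\xi|^2$ acting on the left of $\widehat F(\xi)$. Equivalently, it is minus the gradient (Helmholtz) projection applied to each column of $F$. So we must bound a multiplier whose values are operators on the Hilbert space $M_d(\R)$ with the Hilbert--Schmidt norm, and only a dimension-free constant is acceptable.

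\textbf{Step 1 (representation).} Fix $F\in C_c^\infty(\R^d,M_d(\R))$ and let $U_{jl}(x,t)$ be the heat extensions of $F_{jl}$. Run space-time Brownian motion $(B_t,T-t)$, with $B_0$ distributed according to Lebesgue measure (the $\sigma$-finite trick of Bañuelos--Méndez-Hernández), and form the $M_d(\R)$-valued martingale
\[X^{jl}_t=U_{jl}(B_0,T)+\int_0^t\nabla_x U_{jl}(B_s,T-s)\cdot dB_s .\]
Define the transform
\[Y^{il}_t=\sum_{j}\int_0^t \partial_i U_{jl}(B_s,T-s)\,dB^{j}_s .\]
This is the choice $A=e_je_i^{T}$ in $\int A\nabla U\cdot dB$, arranged so that the indices contract exactly as in the definition of $\RO$.

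\textbf{Step 2 (subordination).} Compute the quadratic variations:
\[d\langle Y\rangle_t=\sum_{i,l}\sum_j(\partial_iU_{jl})^2\,dt=|\nabla_xU|^2\,dt=d\langle X\rangle_t .\]
Thus $Y$ is differentially subordinate to $X$ as $M_d(\R)\cong\R^{d^2}$-valued martingales, indeed with equal quadratic variation. By Burkholder's inequality in Wang's Hilbert-space version, $\|Y_T\|_p\le(p^*-1)\|X_T\|_p\le(p^*-1)\|F\|_p$. The rejected alternative $dY^{il}=\sum_j\partial_jU_{jl}\,dB^i$ would produce an extra factor $d$, and this is why the index placement above matters.

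\textbf{Step 3 (projection and limit).} Set $S_TF(x)=\mathbb E[Y_T\mid B_T=x]$. Conditional expectation is a contraction on $L^p$, so $\|S_TF\|_p\le(p^*-1)\|F\|_p$. A Fourier computation, the standard one for $\int A\nabla U\cdot dB$, identifies the symbol of $S_T$. It has the form $c\,\xi_i\xi_j/|\xi|^2\,(1-e^{-2T|\xi|^2})$, with a normalization $c$ fixed by the convention for the heat semigroup. Letting $T\to\infty$ then gives $\pm\RO$, with the constant matching exactly.

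I expect the main obstacle to be this last bookkeeping step. One must check that the constant in the symbol is exactly $1$, not $1/2$ or $2$, given the chosen time scaling of $B$ versus the heat kernel. This is controlled by using the generator $\frac12\Delta$ consistently with $e^{-t|\xi|^2/2}$. One must also justify the passage $T\to\infty$ and the use of the infinite measure. Both are handled as in Bañuelos--Méndez-Hernández, by density and weak-$*$ limits in $L^p$.
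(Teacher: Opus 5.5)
Your proof is correct. It does not follow the paper's argument, because the paper gives no proof of this proposition at all: it quotes Cassese's Proposition~4.1. The note itself only proves the vector corollary (Proposition~\ref{eq:Cassese2}), by restricting to matrices with a single nonzero column. So you supply a self-contained derivation where the paper has a citation.

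\textbf{The key step is sound.} For each fixed column $l$, your transform is $dY^{\cdot l}=(\nabla_x U_{\cdot l})^{T}\,dB$, i.e.\ transposition of the Jacobian. Transposition is an isometry for the Hilbert--Schmidt norm. Hence Wang's Hilbert-space-valued form of Burkholder's inequality applies with constant exactly $p^*-1$. Since $Y_0=0$, the difference $[X]_t-[Y]_t=|X_0|^2$ is constant and nonnegative, as subordination requires.

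\textbf{The bookkeeping you flag also closes.} Take $P_t$ with symbol $e^{-t|\xi|^2/2}$, i.e.\ generator $\tfrac12\Delta$, matching Brownian motion. Then It\^o's isometry and Plancherel give the projected symbol $\int_0^T\xi_i\xi_j e^{-t|\xi|^2}\,dt=\frac{\xi_i\xi_j}{|\xi|^2}\bigl(1-e^{-T|\xi|^2}\bigr)$. So $c=1$. Since $R_iR_j$ has symbol $-\xi_i\xi_j/|\xi|^2$, the limit operator is $-\RO$, and the sign is irrelevant for the norm.

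\textbf{What your route buys over the citation.} First, it makes explicit that the norm on $M_d(\R)$ is the Hilbert--Schmidt one. The statement leaves this implicit, and your argument genuinely uses the Hilbert structure. Second, it shows the matrix formulation is unnecessary for the note. Running your argument on a single column proves Proposition~\ref{eq:Cassese2} directly. The matrix case costs nothing extra, since the columns' quadratic variations simply add. The citation buys only brevity.
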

\begin{prop}\label{eq:Cassese2}
For $p\in(1,\infty)$ and $d\in {\mathbb N}\setminus\{0,1\}$,
\begin{equation}\label{RieszVector}
\|\RO\|_{p,p}\le p^*-1,
\end{equation}
where the operator $\RO$ acts on a vector-valued function $F\colon\R^d\to \R^d$ by the rule
\begin{equation*}
((\RO)(F))_{i}=\sum\limits_j R_i R_j F_{j}.
\end{equation*}
\end{prop}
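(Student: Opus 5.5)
Our plan is to deduce Proposition~\ref{eq:Cassese2} from Proposition~\ref{eq:Cassese} by embedding vector-valued functions into matrix-valued ones so that the matrix operator acts columnwise. Given $F=(F_1,\dots,F_d)\colon\R^d\to\R^d$ in $L^p(\R^d,\R^d)$, we define the matrix-valued function $G\colon\R^d\to M_d(\R)$ by $G_{j,1}=F_j$ and $G_{j,l}=0$ for $l\ge 2$. So the first column of $G$ is $F$ and all other columns vanish. By the defining rule of the matrix operator in Proposition~\ref{eq:Cassese},
\begin{equation*}
((\RO)(G))_{i,1}=\sum_j R_iR_jF_j=((\RO)(F))_i,\qquad ((\RO)(G))_{i,l}=0\ (l\ge2).
\end{equation*}
Thus $(\RO)(G)$ is the matrix whose first column is $(\RO)(F)$ and whose other columns are zero.

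The key step is to identify the norm on $L^p(\R^d,M_d(\R))$ used in Proposition~\ref{eq:Cassese}. We expect it to be the one induced by the Hilbert--Schmidt (Frobenius) norm on $M_d(\R)$: the Cassese/Nazarov--Treil type Bellman-function estimates are proved for Hilbert-space-valued functions, and $M_d(\R)$ with the Frobenius norm is $\R^{d^2}$ with the Euclidean norm. For a matrix with a single nonzero column $v$, the Frobenius norm equals $|v|$, the Euclidean norm of $v$. Hence pointwise $\|G(x)\|_{HS}=|F(x)|$ and $\|((\RO)G)(x)\|_{HS}=|((\RO)F)(x)|$. Integrating gives $\|G\|_{L^p}=\|F\|_{L^p}$ and $\|(\RO)G\|_{L^p}=\|(\RO)F\|_{L^p}$. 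Applying \eqref{RieszMatrix} to $G$ then yields
\begin{equation*}
\|(\RO)F\|_{L^p(\R^d,\R^d)}\le (p^*-1)\|F\|_{L^p(\R^d,\R^d)},
\end{equation*}
which is \eqref{RieszVector}.

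The only real obstacle is this norm-compatibility check. If the matrix norm in \cite{Cassese} were some other norm, for example the operator norm, the embedding would still be isometric. For a rank-one matrix $v e_1^{T}$, both the operator norm and the Frobenius norm equal $|v|$, and so do all Schatten norms. The column-embedding argument therefore goes through for any such unitarily invariant choice. We would verify this against the statement in \cite{Cassese} and then conclude.
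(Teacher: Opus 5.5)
Your proposal is correct and is essentially the paper's argument. The paper likewise restricts the matrix operator of Proposition~\ref{eq:Cassese} to the invariant subspace of matrix-valued functions whose only nonzero column is the first, on which $\RO$ acts as the vector operator. Your explicit check that this embedding is isometric (the Frobenius, operator and all Schatten norms of $v e_1^{T}$ equal $|v|$) fills in a norm-compatibility point that the paper leaves implicit.
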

\begin{proof}
To prove \eqref{RieszVector}, it suffices to consider the restriction of the operator in \eqref{RieszMatrix} to matrices with only one non-zero column and to use the fact that the subspace
$\Bigl\{F=(F_{ij}) \in L^p(\R^d,M_d(\R)):\;F_{ij}=0 \text{ for } j\neq 1\Bigr\}$
is an invariant subspace for the operator $\RO$.
\end{proof}
Now the triangle inequality gives
\begin{equation*}
\|\T\|_{p,p}\leq 2p^*-1.
\end{equation*}
The drawback of this estimate is that it does not capture the correct behavior for $p$ near $2$ ($\|\T\|_{2,2}=1$ due to the isometry of $\T$). But in the spirit of Lemma 9.1 from \cite{IS2001} we have the following
\begin{lemma}\label{InterpolLemma}
Let $1<p< \infty$, then
\begin{equation}\label{Interpol26}
\|\T\|_{p,p}\le 2.6(p^*-2)+1.
\end{equation}
\end{lemma}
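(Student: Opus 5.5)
The plan is to reduce to $p>2$ by duality, use Proposition~\ref{eq:Cassese2} for large $p$, and use Riesz--Thorin interpolation with the isometry at $p=2$ for the remaining range.

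\emph{Reduction to $p\ge 2$.} Each $R_iR_j$ is self-adjoint with symbol $-\xi_i\xi_j/|\xi|^2$, and $R_iR_j=R_jR_i$. Hence $\T$ is self-adjoint on $L^2(\R^d,\R^d)$, and so $\|\T\|_{p,p}=\|\T\|_{p',p'}$. Since $p^*$ is also invariant under $p\mapsto p'$, it suffices to treat $2\le p<\infty$, where $p^*=p$. The claim then reads $\|\T\|_{p,p}\le 2.6p-4.2$.

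\emph{Large $p$.} Proposition~\ref{eq:Cassese2} and the triangle inequality give $\|\T\|_{p,p}\le 2p-1$. We have $2p-1\le 2.6p-4.2$ exactly when $p\ge 16/3$, so the lemma holds for $p\ge 16/3$. At $p=16/3$ both sides equal $29/3$; this is where the constant $2.6$ comes from.

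\emph{Interpolation for $2<p<16/3$.} Apply Riesz--Thorin between $L^2$, where $\|\T\|_{2,2}=1$, and $L^{q}$ with $q=16/3$, where $\|\T\|_{q,q}\le 29/3$. Writing $\frac1p=\frac{1-\theta}{2}+\frac{\theta}{q}$ gives
\[
\theta(p)=\frac{16}{5}\Bigl(\frac12-\frac1p\Bigr),\qquad \|\T\|_{p,p}\le \Bigl(\tfrac{29}{3}\Bigr)^{\theta(p)}.
\]
It therefore remains to prove the elementary inequality
\[
g(p):=\ln\bigl(2.6p-4.2\bigr)-\theta(p)\,\ln\tfrac{29}{3}\ \ge 0 \qquad\text{on } [2,16/3].
\]

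\emph{The calculus step, which is the main obstacle.} Set $c=\frac{16}{5}\ln\frac{29}{3}\approx 7.26$. Then $g(2)=g(16/3)=0$, and
\[
g'(p)=\frac{2.6}{2.6p-4.2}-\frac{c}{p^2}.
\]
Zeros of $g'$ satisfy the quadratic $2.6p^2-2.6c\,p+4.2c=0$. Numerically its roots are about $0.6$ and $6.7$, so $g'$ has no zero in $[2,16/3]$. Between them the quadratic is negative, so $g'<0$ throughout $[2,16/3]$. This contradicts $g(2)=g(16/3)=0$, since $g$ would be strictly decreasing. A spot check confirms the trouble: $g'(2)=1.625-c/4\approx -0.19$ and $g(4)=\ln 6.2-0.8\ln\frac{29}{3}\approx 1.825-1.815>0$ but small. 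So the signs need care, and the shape of $g$ must be pinned down exactly.

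I would first recheck the sign of $g'$ at the endpoints. If $g'(2)<0$, then $g<0$ just to the right of $2$, so interpolating from the single endpoint $q=16/3$ cannot give the linear bound near $p=2$. In that case I would instead interpolate between $2$ and a $p$-dependent exponent $q\ge 16/3$, chosen to minimise $(2q-1)^{\theta_q(p)}$. Alternatively, I would interpolate in the complex-interpolation sense between $\T$ at $2$ and the bound $2q-1$ for a family of $q$, and take the infimum. The verification that this infimum lies below $2.6(p-2)+1$ is the genuinely delicate, purely numerical step. I would carry it out by writing the optimal $q$ through the first-order condition and checking the resulting one-variable inequality on a compact interval.
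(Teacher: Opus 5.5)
There is a genuine gap. The interpolation step carries the whole content of the lemma, and it is never completed. Worse, the route you set up with endpoint $q=16/3$ actually fails, and your diagnosis of why rests on arithmetic slips.

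\textbf{The arithmetic.} Since $2.6\cdot 2-4.2=1$, you have $g'(2)=2.6-c/4\approx 0.79>0$, not $1.625-c/4$. The roots of $2.6p^2-2.6c\,p+4.2c$ are about $2.43$ and $4.83$, not $0.6$ and $6.7$. So $g$ increases on $[2,2.43]$, decreases on $[2.43,4.83]$, and then increases on $[4.83,16/3]$ up to $g(16/3)=0$.

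\textbf{Where the route fails.} It follows that $g<0$ on an interval ending at $16/3$, roughly $(4.4,16/3)$. For instance, at $p=5$ you get $\theta=0.96$ and $(29/3)^{0.96}\approx 8.83>8.8=2.6\cdot 5-4.2$. The trouble is therefore not near $p=2$ but at your endpoint $q=16/3$. There the Cassese bound $2q-1$ has no slack against the target, since both equal $29/3$. Moreover, in the plane $(s,\log\|\T\|_{s^{-1},s^{-1}})$ with $s=1/p$, the interpolation line has slope $\approx -7.26$. The target curve $\log(2.6/s-4.2)$ has slope $\approx -7.65$ at $s=3/16$. So the interpolated bound overshoots immediately to the left of $p=16/3$; equivalently, $g'(16/3)\approx 0.014>0$.

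\textbf{The fallback.} Optimising over a $p$-dependent $q$ is a reasonable idea, but you explicitly leave its numerical verification undone, so as written nothing is proved. The missing ingredient is to interpolate against a single endpoint strictly beyond the crossover, where $2q-1<2.6q-4.2$ gives slack.

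\textbf{How the paper does it.} The paper takes $q=6$, where $\|\T\|_{6,6}\le 11<11.4$. With $\theta=3(p-2)/(2p)$, the claim on $[2,6]$ becomes $G(\theta)=3+8.4\theta-(3-2\theta)11^{\theta}\ge 0$ on $[0,1]$. Since $G''$ changes sign exactly once, at $\theta_0=\tfrac32-\tfrac{2}{\log 11}\approx 0.666$, the check splits in two.
\begin{itemize}
\item On $[0,\theta_0]$, $G$ is concave, and $G(0)=0$, $G(\theta_0)\approx 0.36>0$, so $G\ge 0$ there.
\item On $[\theta_0,1]$, $G$ is convex, so it lies above its tangent at $\theta_1=0.86$, where $G(\theta_1)\approx 0.16$ and $|G'(\theta_1)|<0.01$. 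That tangent line is positive on the whole interval.
\end{itemize}

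Your duality reduction to $p\ge 2$ and your large-$p$ step for $p\ge 16/3$ are correct and compatible with this repair.
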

\begin{proof}\footnote{The proof of Lemma 4.1 in \cite{IS2001} was left to the reader, so we give here full details for our lemma.}
If $p>6$, then $2p-1\le 2.6p-4.2$, which gives \eqref{Interpol26} for these values of $p$.
\par
Now let $2\le p\le 6$. By the Riesz--Thorin interpolation theorem applied between $L^2$ and $L^6$, $$\|\T\|_{p,p}\le \|\T\|_{2,2}^{1-\theta}\,\|\T\|_{6,6}^{\theta}\le 11^{\theta},
\qquad \frac1p=\frac{1-\theta}{2}+\frac{\theta}{6},$$
so that $\theta=\frac{3(p-2)}{2p}\in[0,1], p=\frac{6}{3-2\theta}.$
Substituting $p=6/(3-2\theta)$ into the desired inequality \eqref{Interpol26} and multiplying by $3-2\theta>0$, we find that the claim becomes $(3-2\theta)\,11^{\theta}\le 3+8.4\theta, \theta\in[0,1].$
Define
$$G(\theta)=3+8.4\theta-(3-2\theta)11^{\theta}, \qquad \theta\in[0,1].$$
We must show $G(\theta)\ge 0$ on $[0,1]$.
Differentiating, we obtain
$$G'(\theta)=8.4+11^{\theta}\bigl(2-(3-2\theta)\log 11\bigr),$$
$$G''(\theta)=11^{\theta}\log 11\,\bigl(4-(3-2\theta)\log 11\bigr).$$
$G''$ is strictly increasing in $\theta$, so $G''$ has a unique zero
$\theta_0=\frac{3}{2}-\frac{2}{\log 11}\approx 0.666$,
with $G''<0$ on $[0,\theta_0)$ and $G''>0$ on $(\theta_0,1]$. Thus
$G$ is \emph{concave} on $[0,\theta_0]$ and \emph{convex} on $[\theta_0,1]$.
\par
Since $G$ is concave on $[0,\theta_0]$, its graph lies above the chord joining
$(0,G(0))$ and $(\theta_0,G(\theta_0))$. We have $G(0)=0$, and $G(\theta_0)\approx 0.358> 0$.
Since the chord joining $(0,0)$ and $(\theta_0,G(\theta_0))$ is nonnegative on $[0,\theta_0]$, concavity gives $G(\theta)\ge 0$ for all $\theta\in[0,\theta_0]$.
\par
Since $G$ is convex on $[\theta_0,1]$, it lies above each of its tangent lines on this interval. Take $\theta_1=0.86\in [\theta_0,1]$. Let $L(\theta)=G(\theta_1)+G'(\theta_1)(\theta-\theta_1)$ be the tangent line at $\theta_1$.
Since $L$ is linear, it suffices to check its sign at the endpoints of $[\theta_0,1]$. Numerical evaluation gives
$$G(\theta_1)\approx 0.1591,\: G'(\theta_1)\approx -0.0081,\: L(\theta_0)\approx 0.1606>0, \: L(1)\approx 0.1578>0.$$
Hence $L\ge 0$ on $[\theta_0,1]$, and by convexity $G(\theta)\ge L(\theta)\ge 0$ for all $\theta\in[\theta_0,1]$.
\par
Since $\T$ is self-adjoint on $L^2$, duality yields $\|\T\|_{p,p}=\|\T\|_{p',p'}$. This allows us to extend \eqref{Interpol26} to $1<p\le2$.
\end{proof}
\begin{remark}
The calculations in the proof of this lemma can be slightly refined, and the constant $2.6$ can be replaced by $2.55$. However, this improvement is not significant.
\end{remark}
Combining Lemma \ref{InterpolLemma} with estimate \eqref{KBound}, we obtain Theorem \ref{main}.

\end{document}